\newtheorem{theorem}{Theorem}
\newtheorem{lemma}[theorem]{Lemma} 
\newtheorem{proposition}[theorem]{Proposition}
\theoremstyle{definition}
\theoremstyle{remark}
\newcommand{\R}{{\mathbb{R}}}
\newcommand\Exp{{\mathbb{E}}}
\renewcommand\Pr{{\mathbb{P}}}
\newcommand\1{{\mathbf 1}}
\newcommand{\ud}{{\mathrm{d}}}
\newcommand\be{{\mathbf{e}}}
\newcommand\bz{{\mathbf{z}}}
\newcommand{\cH}{{\mathcal{H}}}
\newcommand{\hull}{\mathop{{\rm hull}}}
\newcommand{\diam}{\mathop{{\rm diam}}}
\title{On the expected diameter of planar Brownian motion}
\author{James McRedmond\thanks{Department of Mathematical Sciences, Durham University, South Road, Durham DH1 3LE, UK.}\hspace{0.4in} Chang Xu\thanks{Department of Mathematics and Statistics, University of Strathclyde, 26 Richmond Street, Glasgow G1 1XH, UK.}} 
\date{\today}
\begin{document}

\maketitle

\begin{abstract}
Known results show that the diameter $d_1$ of the trace of planar Brownian
motion run for unit time satisfies $1.595 \leq \Exp d_1 \leq 2.507$.
This note improves these bounds to $1.601 \leq \Exp d_1 \leq 2.355$.
Simulations suggest that $\Exp d_1 \approx 1.99$.
\end{abstract}

\smallskip
\noindent
{\em Keywords:} Brownian motion; convex hull; diameter.

\smallskip
\noindent
{\em 2010 Mathematics Subject Classifications:} 60J65 (Primary) 60D05 (Secondary). 

  \bigskip

Let $(b_t, t \in [0,1])$ be standard planar Brownian motion, and consider
the set
$b[0,1] = \{ b_t : t \in [0,1] \}$. The Brownian convex hull $\cH_1 := \hull b [0,1]$ has been well-studied
from L\'evy~\cite[\S52.6, pp.~254--256]{levy} onwards; the expectations of the perimeter length $\ell_1$ and area $a_1$ of $\cH_1$ are given by the exact formulae  $\Exp \ell_1 = \sqrt{8\pi}$ (due to
Letac and T\'akacs \cite{letac,takacs}) and $\Exp a_1 = \pi /2$ (due to El Bachir \cite{bachir}).

Another characteristic is the \emph{diameter} \[d_1 := \diam \cH_1 = \diam b [0,1]=\sup_{x,y\in b[0,1]}\|x-y\|,\]
for which, in contrast, no explicit formula is known. The exact formulae for $\Exp \ell_1$
and $\Exp a_1$ rest on geometric integral formulae of Cauchy; since no such formula
is available for $d_1$, it may not be possible to obtain an explicit formula for $\Exp d_1$.
However, one may get bounds.

By convexity, we have the almost-sure inequalities
$2 \leq \ell_1/ d_1 \leq \pi$,
the extrema being the line segment and shapes of constant width (such as the disc). In other words, 
\[ \frac{\ell_1}{\pi} \leq d_1 \leq \frac{\ell_1}{2} .\]
The formula of Letac and Tak\'acs \cite{letac,takacs} says that $\Exp \ell_1 = \sqrt{ 8 \pi}$, so
we get:
\begin{proposition}
\label{prop1}
$\sqrt{ 8 / \pi } \leq \Exp d_1 \leq \sqrt{2 \pi}$.
\end{proposition}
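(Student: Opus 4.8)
The plan is to transfer the pathwise sandwich inequality to one between expectations. The excerpt has already recorded the almost-sure bounds $\ell_1/\pi \le d_1 \le \ell_1/2$, which come purely from the convexity of $\cH_1$; together with the Letac and T\'akacs value $\Exp \ell_1 = \sqrt{8\pi}$ these are the only ingredients needed, so the argument is essentially one application of monotonicity of expectation followed by a simplification.

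Concretely, I would proceed as follows. First I would note that $d_1 \ge 0$ and $d_1 \le \ell_1/2$ almost surely, so that, since $\Exp \ell_1 = \sqrt{8\pi} < \infty$, the random variable $d_1$ is integrable and $\Exp d_1$ is a well-defined finite number; this legitimises everything that follows and removes any integrability worry. Next I would apply monotonicity of expectation to the almost-sure inequalities $\ell_1/\pi \le d_1 \le \ell_1/2$ to obtain
\[ \frac{\Exp \ell_1}{\pi} \le \Exp d_1 \le \frac{\Exp \ell_1}{2}. \]
Finally I would substitute $\Exp \ell_1 = \sqrt{8\pi}$ and simplify: on the left $\sqrt{8\pi}/\pi = \sqrt{8/\pi}$ (squaring gives $8\pi/\pi^2 = 8/\pi$), and on the right $\sqrt{8\pi}/2 = \sqrt{2\pi}$ (squaring gives $8\pi/4 = 2\pi$). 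This yields exactly $\sqrt{8/\pi} \le \Exp d_1 \le \sqrt{2\pi}$, as required.

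There is no serious obstacle here: all of the analytic work is concealed in the cited identity $\Exp \ell_1 = \sqrt{8\pi}$, and all of the geometry is concealed in the convexity inequalities, both of which I am entitled to assume. Were one to insist on a self-contained geometric step, the only point needing thought is the upper bound $\ell_1 \le \pi d_1$; this follows from Cauchy's formula $\ell_1 = \int_0^\pi w(\theta)\,\ud\theta$, where $w(\theta)$ denotes the width of $\cH_1$ in direction $\theta$, combined with the pointwise estimate $w(\theta) \le \diam \cH_1 = d_1$. The complementary lower bound $\ell_1 \ge 2 d_1$ is immediate: the two endpoints of a diametral segment lie on the boundary and split it into two arcs, each of length at least the chord length $d_1$.
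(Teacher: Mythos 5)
Your proposal is correct and is essentially the paper's own argument: the authors likewise obtain Proposition~\ref{prop1} by taking expectations in the almost-sure sandwich $\ell_1/\pi \leq d_1 \leq \ell_1/2$ and substituting $\Exp \ell_1 = \sqrt{8\pi}$. The extra remarks on integrability and on the geometric origin of the inequalities (Cauchy's formula and the diametral-chord argument) are sound but not part of the paper's proof, which simply cites convexity.
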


Note that $\sqrt{8 / \pi} \approx 1.5958$ and $\sqrt{2\pi} \approx 2.5066$.
In this note we improve both of these bounds. 

For the lower bound, we note that $b[0,1]$ is compact and thus, as a corollary of Lemma \ref{lem:diam} below, we have the formula
\begin{equation}
\label{eq:d-formula}
 d_1 = \sup_{0 \leq \theta \leq \pi} r (\theta)  ,
\end{equation}
where $r$ is the parametrized range function given by
\[ r (\theta) = \sup_{0 \leq s \leq 1} \left(b_s \cdot \be_\theta\right)
 - \inf_{0 \leq s \leq 1} \left(b_s \cdot \be_\theta\right), \]
with $\be_\theta$ being the unit vector $(\cos \theta, \sin \theta)$. Feller~\cite{feller} established that
\begin{equation}
\label{eq:feller-bounds}
 \Exp r (\theta) =   \sqrt{ 8 /\pi } \quad \text{ and } \quad \Exp ( r(\theta)^2 ) = 4 \log 2  ,
\end{equation}
and the density of $r(\theta)$ is given explicitly as
\begin{equation}
\label{eq:density}
f (r ) = \frac{8}{\sqrt{2 \pi}} \sum_{k=1}^\infty (-1)^{k-1} k^2 \exp \{ - k^2 r^2 /2 \} , ~ ( r \geq 0).
\end{equation}
Combining~\eqref{eq:d-formula} with~\eqref{eq:feller-bounds} gives immediately
$\Exp d_1 \geq \Exp r (0) = \sqrt{ 8 /\pi }$, which is just the lower bound 
in Proposition~\ref{prop1}. 
For a better result, a consequence of~\eqref{eq:d-formula} is that
 $d_1 \geq \max \{ r(0), r(\pi/2) \}$. Observing that $r(0)$ and $r(\pi/2)$ are independent, we get:

\begin{lemma}
\label{lem1}
$\Exp d_1 \geq \Exp \max \{ X_1, X_2 \}$, where $X_1$ and $X_2$
are independent copies of $X := r(0)$.
\end{lemma}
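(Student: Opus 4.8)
The plan is to read the lemma off directly from the range representation~\eqref{eq:d-formula}. Since the two directions $\theta = 0$ and $\theta = \pi/2$ both lie in $[0,\pi]$, restricting the supremum in~\eqref{eq:d-formula} to just these two values gives the almost-sure bound $d_1 \geq \max\{r(0), r(\pi/2)\}$. The first step is then to take expectations of this inequality. This is legitimate because every random variable in sight is integrable: from the convexity bound $d_1 \leq \ell_1/2$ together with $\Exp \ell_1 = \sqrt{8\pi} < \infty$ we get $\Exp d_1 < \infty$, while $\max\{r(0), r(\pi/2)\} \leq r(0) + r(\pi/2)$ has finite mean by~\eqref{eq:feller-bounds}. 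Monotonicity of expectation then yields $\Exp d_1 \geq \Exp \max\{r(0), r(\pi/2)\}$.

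The second, and central, step is to identify the joint law of the pair $(r(0), r(\pi/2))$. Writing $b_t = (B^{(1)}_t, B^{(2)}_t)$ for the two coordinate processes, one has $b_s \cdot \be_0 = B^{(1)}_s$ and $b_s \cdot \be_{\pi/2} = B^{(2)}_s$, so that $r(0)$ is a measurable functional of $B^{(1)}$ alone and $r(\pi/2)$ of $B^{(2)}$ alone. Since $B^{(1)}$ and $B^{(2)}$ are independent standard one-dimensional Brownian motions, it follows that $r(0)$ and $r(\pi/2)$ are independent. Each moreover has the law of $X$: by definition $r(0) = X$, and since $B^{(2)} \stackrel{d}{=} B^{(1)}$ we also have $r(\pi/2) \stackrel{d}{=} X$.

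Combining the two steps, the pair $(r(0), r(\pi/2))$ has exactly the law of a pair $(X_1, X_2)$ of independent copies of $X$, whence $\Exp \max\{r(0), r(\pi/2)\} = \Exp \max\{X_1, X_2\}$ and the claimed bound follows. I expect no genuine analytic obstacle here: the only point that truly needs to be made is the independence of the two orthogonal range functionals, which is precisely the observation that projecting planar Brownian motion onto two perpendicular directions produces independent one-dimensional motions. Everything else is monotonicity of expectation together with a distributional identity.
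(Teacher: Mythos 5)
Your proof is correct and follows exactly the argument the paper sketches: restrict the supremum in the range representation to $\theta \in \{0, \pi/2\}$ to get $d_1 \geq \max\{r(0), r(\pi/2)\}$, then use the independence of the two orthogonal projections of planar Brownian motion to identify $(r(0), r(\pi/2))$ as an i.i.d.\ pair distributed as $X$. The only difference is that you spell out the integrability and the distributional identification more explicitly than the paper, which simply asserts the two facts before stating the lemma.
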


It seems hard to explicitly compute $\Exp \max \{ X_1, X_2 \}$ in Lemma~\ref{lem1}, because although the density given at~\eqref{eq:density}
is known explicitly, it is not very tractable. Instead we obtain a lower bound.
Since
\[ \max \{ x , y \} = \frac{1}{2} \left( x + y + | x -y | \right) \]
we get
\begin{equation}
\label{eq:max}
\Exp \max \{ X_1, X_2 \} = \Exp X + \frac{1}{2} \Exp | X_1 - X_2 | .
\end{equation}
Thus with Lemma~\ref{lem1}, the lower bound in Proposition~\ref{prop1} is improved
given any non-trivial lower bound for $\Exp | X_1 - X_2 |$. 
Using the fact that for any $c \in \R$, if $m$ is a median of $X$, 
$\Exp | X - c | \geq \Exp | X - m |$,
we see that 
\begin{align*}
\Exp | X_1 - X_2 |   \geq \Exp | X - m | .
\end{align*}
Again, the intractability of the density at~\eqref{eq:density} makes it hard to exploit this.
Instead, we provide the following as a crude lower bound on $\Exp | X_1 - X_2 |$.

\begin{lemma}
\label{lem:ineq1}
For any $a,h >0$,
\begin{equation} \Exp | X_1 - X_2 | \geq 2 h \,\Pr ( X \leq a )\, \Pr ( X \geq a + h) . \nonumber \label{eqn:ineq1}
\end{equation}
\end{lemma}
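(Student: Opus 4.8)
The plan is to bound $|X_1 - X_2|$ from below by $h$ on an event whose probability factorises nicely, and to discard everything else. The natural event to use is the one in which one of the two copies lands at or below $a$ while the other lands at or above $a+h$: on such a configuration the two values are separated by a gap of at least $(a+h) - a = h$, so that $|X_1 - X_2| \geq h$ there. This is exactly the separation that the two parameters $a$ and $a+h$ are set up to create.

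Concretely, I would introduce the two events
\[ A := \{ X_1 \leq a, \ X_2 \geq a + h \}, \qquad B := \{ X_2 \leq a, \ X_1 \geq a + h \}. \]
Since $h > 0$ gives $a < a+h$, on $A$ we have $X_1 \leq a < a + h \leq X_2$, whence $X_1 < X_2$, while on $B$ symmetrically $X_2 < X_1$; hence $A$ and $B$ are disjoint, and $|X_1 - X_2| \geq h$ holds throughout $A \cup B$. The key step is then the pointwise inequality
\[ |X_1 - X_2| \geq h\, \1_{A \cup B} = h\,(\1_A + \1_B), \]
where the final equality uses the disjointness of $A$ and $B$.

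Taking expectations, and using that $X_1, X_2$ are independent with common law that of $X$, gives
\[ \Exp |X_1 - X_2| \geq h\,\big( \Pr(A) + \Pr(B) \big) = h\,\big( \Pr(X \leq a)\,\Pr(X \geq a+h) + \Pr(X \leq a)\,\Pr(X \geq a+h) \big), \]
which is precisely $2h\,\Pr(X \leq a)\,\Pr(X \geq a+h)$, the claimed bound.

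I do not expect a genuine obstacle here: the estimate is deliberately crude, retaining only the contribution of the ``well-separated'' configurations and replacing the actual separation by its minimal value $h$. The only points requiring care are verifying that $A$ and $B$ are truly disjoint, so that their indicator functions sum without an inclusion--exclusion correction, and that the factorisations of $\Pr(A)$ and $\Pr(B)$ are legitimate; both are immediate from $h > 0$ and the independence of $X_1$ and $X_2$. The substantive work is postponed to the subsequent optimisation over the free parameters $a$ and $h$, which is where this lemma is converted into a numerical improvement of the lower bound in Proposition~\ref{prop1}.
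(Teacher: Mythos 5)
Your proof is correct and follows essentially the same route as the paper's: restrict to the two disjoint ``well-separated'' events, bound $|X_1-X_2|$ below by $h$ there, and factorise using independence. Your explicit verification that the two events are disjoint is a point the paper leaves implicit, but the argument is the same.
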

\begin{proof}
We have
\begin{align*}
\Exp | X_1 - X_2 | 
& \geq \Exp\left[ |X_1-X_2|\1\{X_1\leq a, X_2 \geq a+h\} \right]\\
&{}\qquad{}+\Exp\left[ |X_1-X_2|\1\{X_2\leq a, X_1 \geq a+h\} \right] \\
& \geq h \,\Pr(X_1 \leq a)\,\Pr(X_2\geq a+h)+h \,\Pr(X_2\leq a)\,\Pr(X_1\geq a+h) \\
& = 2h \,\Pr(X\leq a)\,\Pr(X\geq a+h),
\end{align*} 
which proves the statement.
\end{proof}
This lower bound yields the following result.
\begin{proposition}
For $a, h > 0$ define
\[ g ( a, h) :=  h \left( \frac{4}{\pi} \exp \left\{ - \frac{\pi^2}{2a^2 } \right\} - \frac{4}{3\pi}   \exp \left\{ - \frac{9\pi^2}{2a^2 } \right\}  \right)
\left(  1 - \frac{4}{\pi} \exp \left\{ - \frac{\pi^2}{8(a+h)^2 } \right\}  \right) .\]
Then $\Exp d_1 \geq \sqrt{ 8 / \pi} + g ( 1.492, 0.337) \approx 1.6014$.
\end{proposition}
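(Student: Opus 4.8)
The plan is to assemble the chain of inequalities already set up in the excerpt and then reduce the two unknown probabilities $\Pr(X \le a)$ and $\Pr(X \ge a+h)$ to the confinement probability of a one-dimensional Brownian motion in a symmetric interval, for which a clean theta-type series is available. Combining Lemma~\ref{lem1}, the identity~\eqref{eq:max}, the value $\Exp X = \sqrt{8/\pi}$ from~\eqref{eq:feller-bounds}, and Lemma~\ref{lem:ineq1}, I first obtain, for every $a,h>0$,
\[ \Exp d_1 \ge \Exp X + \tfrac12\Exp|X_1 - X_2| \ge \sqrt{8/\pi} + h\,\Pr(X \le a)\,\Pr(X \ge a+h). \]
It then suffices to bound the last two probabilities below by the two bracketed factors in $g(a,h)$ and to optimise numerically over $(a,h)$.

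The key input is the survival probability of a standard one-dimensional Brownian motion $(W_s)$ in a symmetric interval: starting from the midpoint of an interval of length $L$, an eigenfunction expansion (equivalently, the method of images) gives
\[ \Pr\Big(\sup_{0\le s\le 1}|W_s| \le \tfrac L2\Big) = \frac{4}{\pi}\sum_{j=0}^\infty \frac{(-1)^j}{2j+1}\exp\Big\{-\frac{(2j+1)^2\pi^2}{2L^2}\Big\}. \]
Recall that $X = r(0)$ is the range over $[0,1]$ of the first coordinate $W_s := b_s\cdot\be_0$, itself a standard one-dimensional Brownian motion. I would use two containment relations. For the lower bound on $\Pr(X\le a)$: if $W$ stays in the centred window $(-a/2,a/2)$ then its range is at most $a$, so $\Pr(X\le a) \ge \Pr(\sup_s|W_s|\le a/2)$, which is the series above with $L=a$. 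For the lower bound on $\Pr(X\ge a+h)$: since the range always dominates $\sup_s|W_s|$, we have $\{X \le a+h\}\subseteq\{\sup_s|W_s|\le a+h\}$, whence $\Pr(X\ge a+h) \ge 1 - \Pr(\sup_s|W_s|\le a+h)$, and the last probability is the series above with $L = 2(a+h)$, i.e. with exponents $-\,(2j+1)^2\pi^2/(8(a+h)^2)$. This deliberately sidesteps the intractable range density~\eqref{eq:density}.

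Both series are alternating, and I would check that their terms decrease strictly in magnitude for every relevant argument (the ratio of consecutive terms is $\tfrac{2j+1}{2j+3}\exp\{-c(j+1)\}<1$ with $c>0$). Hence the partial sums bracket the true value: truncating the first series after its two leading terms yields the lower bound $\tfrac{4}{\pi}\exp\{-\pi^2/(2a^2)\} - \tfrac{4}{3\pi}\exp\{-9\pi^2/(2a^2)\}$ for $\Pr(X\le a)$, while truncating the second series after its single leading term gives the upper bound $\tfrac{4}{\pi}\exp\{-\pi^2/(8(a+h)^2)\}$ for $\Pr(\sup_s|W_s|\le a+h)$, hence the factor $1 - \tfrac{4}{\pi}\exp\{-\pi^2/(8(a+h)^2)\}$ for $\Pr(X\ge a+h)$. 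Multiplying the two factors by $h$ reproduces $g(a,h)$, so $\Exp d_1 \ge \sqrt{8/\pi} + g(a,h)$ for all $a,h>0$; substituting the numerically near-optimal values $a=1.492$, $h=0.337$ and evaluating gives $\approx 1.6014$.

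The main obstacle I anticipate is not the probabilistic content but the careful justification of the one-sided truncations: I must confirm the monotone-decreasing property of the series terms uniformly in the chosen arguments so that the alternating-series bracketing is valid, and I must ensure both bracketed factors are positive at the chosen $(a,h)$ so that the product bound is genuine. The choice of $a$ and $h$ comes from maximising the explicit function $g$, which is a routine (if slightly delicate) numerical optimisation rather than a structural difficulty.
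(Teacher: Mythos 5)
Your argument is correct and follows essentially the same route as the paper: the reductions $\Pr(X\le a)\ge\Pr(Z\le a/2)$ and $\Pr(X\ge a+h)\ge\Pr(Z\ge a+h)$ via $Z\le X\le 2Z$ for $Z=\sup_{0\le s\le 1}|b_s\cdot\be_0|$, combined with Lemma~\ref{lem:ineq1}, identity~\eqref{eq:max} and $\Exp X=\sqrt{8/\pi}$, are exactly the paper's steps. The only (cosmetic) difference is that you derive the two-sided bounds on the confinement probability by truncating the alternating theta series yourself, whereas the paper quotes precisely these truncations from Jain and Pruitt~\cite{jp}.
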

\begin{proof}
Consider 
\[ Z := \sup_{0 \leq s \leq 1} | b_s \cdot \be_0 | . \]
Then it is known (see \cite{jp}) that for $x > 0$, 
\begin{equation}
\label{eq:jp} \frac{4}{\pi} \exp \left\{ - \frac{\pi^2}{8x^2 } \right\} - \frac{4}{3\pi}   \exp \left\{ - \frac{9\pi^2}{8x^2 } \right\} \leq \Pr ( Z < x ) \leq \frac{4}{\pi} \exp \left\{ - \frac{\pi^2}{8x^2 } \right\} .\end{equation}
Moreover, we have
\[ Z \leq X \leq 2 Z .\]
 Since $X \leq 2Z$, we have
\[ \Pr ( X \leq a ) \geq \Pr ( Z \leq a/2) \geq \frac{4}{\pi} \exp \left\{ - \frac{\pi^2}{2a^2 } \right\} - \frac{4}{3\pi}   \exp \left\{ - \frac{9\pi^2}{2a^2 } \right\} ,\]
by the lower bound in~\eqref{eq:jp}. On the other hand,
\[ \Pr ( X \geq a +h ) \geq \Pr ( Z \geq a+h ) \geq 1 - \frac{4}{\pi} \exp \left\{ - \frac{\pi^2}{8(a+h)^2 } \right\} ,\]
by the upper bound in~\eqref{eq:jp}. Combining these two bounds and applying Lemma \ref{lem:ineq1} we get
$\Exp | X_1 - X_2 | \geq 2 g (a , h)$. 
So from~\eqref{eq:max} and the fact that $\Exp X = \sqrt{ 8 / \pi}$ by~\eqref{eq:feller-bounds}
we get $\Exp d_1 \geq \sqrt{ 8 / \pi} + g (a,h)$.
Numerical evaluation using MAPLE suggests that
$(a,h) = (1.492,0.337)$ is close to optimal, and this choice
gives the statement in the proposition.
\end{proof}

We also improve the upper bound in Proposition~\ref{prop1}.

\begin{proposition}
\label{prop3}
$\Exp d_1 \leq  \sqrt{ 8 \log 2 } \approx 2.3548$.
\end{proposition}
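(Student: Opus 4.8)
The plan is to bound the second moment $\Exp(d_1^2)$ and then pass to $\Exp d_1$ by Jensen's inequality. This is motivated by the observation that $\sqrt{8 \log 2} = \sqrt{2\,\Exp(r(\theta)^2)}$ by~\eqref{eq:feller-bounds}, which strongly hints that the factor of $2$ should arise from combining the ranges in two \emph{orthogonal} directions.

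First I would establish the almost-sure inequality
\[ d_1^2 \leq r(\theta)^2 + r(\theta + \tfrac{\pi}{2})^2 \qquad \text{for every } \theta . \]
This follows from a Pythagorean decomposition: for any two points $x, y \in b[0,1]$, expanding $x - y$ in the orthonormal basis $\{ \be_\theta, \be_{\theta + \pi/2} \}$ gives $\|x - y\|^2 = ((x-y)\cdot \be_\theta)^2 + ((x-y)\cdot \be_{\theta + \pi/2})^2$, and each projected difference is bounded in absolute value by the corresponding range, since $|(x-y)\cdot \be_\theta| = | x \cdot \be_\theta - y \cdot \be_\theta | \leq r(\theta)$ and likewise for $\be_{\theta + \pi/2}$. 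Taking the supremum over $x, y \in b[0,1]$ and using the diameter formula~\eqref{eq:d-formula} yields the claimed bound for each fixed $\theta$.

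Next I would take expectations. By the rotational invariance of planar Brownian motion, $r(\theta + \pi/2)$ has the same law as $r(\theta)$, so~\eqref{eq:feller-bounds} gives $\Exp(r(\theta)^2) = \Exp(r(\theta + \pi/2)^2) = 4 \log 2$. Hence $\Exp(d_1^2) \leq 8 \log 2$. Finally, Jensen's inequality (equivalently Cauchy--Schwarz) gives $\Exp d_1 \leq \sqrt{\Exp(d_1^2)} \leq \sqrt{8 \log 2}$, which is the assertion of the proposition.

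I do not expect a serious obstacle here: the only substantive step is spotting the orthogonal decomposition that converts the diameter into a sum of two ranges, after which Feller's exact second moment and Jensen finish the argument with no further calculation. The looseness of this bound $\approx 2.355$ relative to the simulated value $\approx 1.99$ is presumably attributable to two sources of slack: the crude estimate $d_1^2 \leq r(\theta)^2 + r(\theta + \pi/2)^2$ (which ignores the correlation between the two projected ranges and the fact that the diameter-realising direction need not be $\theta$), and the gap introduced by passing from $\Exp(d_1^2)$ to $\Exp d_1$ through Jensen.
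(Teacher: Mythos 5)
Your proposal is correct and follows essentially the same route as the paper: the key inequality $d_1^2 \leq r(0)^2 + r(\pi/2)^2$, then Feller's second moment and Jensen. The only cosmetic difference is that you prove the key inequality by expanding $\|x-y\|^2$ in an orthonormal basis, whereas the paper encloses $b[0,1]$ in a rectangle with sides $r(0)$ and $r(\pi/2)$ and takes the diagonal --- the same geometric fact, and note that your pointwise argument needs only the definition of the diameter, not the formula~\eqref{eq:d-formula}.
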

\begin{proof}
First, we claim that
\begin{equation}
\label{eq:rectangle-claim}
d_1^2 \leq r(0)^2 + r ( \pi/2)^2 .
\end{equation}
It follows from~\eqref{eq:rectangle-claim} and~\eqref{eq:feller-bounds} that
\[ \Exp ( d_1^2 ) \leq  \Exp ( X_1^2 + X_2^2 ) = 2 \Exp ( X^2 ) = 8 \log 2 . \]
The result now follows by Jensen's inequality.

It remains to prove the claim~\eqref{eq:rectangle-claim}. Note that the diameter is an increasing function, that is, if $A\subseteq B$ then $\diam A \leq \diam B$. Note also, that by the definition of $r(\theta)$, $b[0,1] \subseteq \bz + [0,r(0)]\times[0,r(\pi/2)]=: R_\bz$ for some $\bz\in \R^2$. Since the diameter of the set $R_\bz$ is attained at the diagonal,
\begin{equation*}
\diam R_\bz = \sqrt{r(0)^2 + r(\pi/2)^2},
\end{equation*}
for all $\bz \in \R^2$, and we have $\diam b[0,1] \leq \diam R_\bz$, the result follows.
\end{proof}

We make one further remark about second moments. In the proof of Proposition~\ref{prop3}, we saw that
 $\Exp ( d_1^2 ) \leq 8 \log 2 \approx 5.5452$.
A bound in the other direction can be obtained from the fact that $d_1^2 \geq \ell_1^2 / \pi^2$, and we have
(see~\cite[\S 4.1]{wx2}) that
\[ \Exp ( \ell_1^2 ) = 4 \pi \int_{-\pi/2}^{\pi/2}  \ud \theta \int_0^\infty \ud u  \cos \theta
\frac{ \cosh (u \theta  )}
{ \sinh ( u \pi /2 ) } \tanh \left( \frac{ (2 \theta + \pi) u }{4} \right) \approx 26.1677 ,\]
which gives $\Exp (d_1^2 ) \geq 2.651$.

Finally, for completeness, we state and prove the lemma which was used to obtain equation \eqref{eq:d-formula}.   
\begin{lemma}
\label{lem:diam}
Let $A \subset \R^d$ be a nonempty compact set, and let $r_A(\theta) = \sup_{x \in A} ( x \cdot \be_\theta ) - \inf_{x \in A} (x \cdot \be_\theta)$.
Then
\[ \diam A = \sup_{0 \leq \theta \leq \pi} r_A (\theta ). \]
\end{lemma}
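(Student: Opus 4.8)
The plan is to establish the claimed identity by proving the two inequalities $\sup_{0\le\theta\le\pi} r_A(\theta) \le \diam A$ and $\diam A \le \sup_{0\le\theta\le\pi} r_A(\theta)$ separately. The natural starting point is to rewrite the range function as a single double supremum. Since $\sup_{x\in A}(x\cdot\be_\theta) - \inf_{x\in A}(x\cdot\be_\theta) = \sup_{x\in A}(x\cdot\be_\theta) + \sup_{y\in A}(-y\cdot\be_\theta)$, one obtains
\[ r_A(\theta) = \sup_{x,y\in A} (x-y)\cdot\be_\theta, \]
while by definition $\diam A = \sup_{x,y\in A}\|x-y\|$. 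Writing both quantities as suprema over the same index set $A\times A$ is what makes the comparison transparent.

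For the first inequality I would fix an arbitrary unit direction $\be_\theta$ and arbitrary $x,y\in A$ and apply the Cauchy--Schwarz inequality: $(x-y)\cdot\be_\theta \le \|x-y\|\,\|\be_\theta\| = \|x-y\| \le \diam A$. Taking the supremum over $x,y\in A$ gives $r_A(\theta)\le\diam A$, and then the supremum over $\theta$ yields $\sup_\theta r_A(\theta)\le\diam A$. This direction uses nothing beyond $\|\be_\theta\|=1$ and holds for any bounded $A$.

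For the reverse inequality I would use compactness. Because $(x,y)\mapsto\|x-y\|$ is continuous on the compact set $A\times A$, the diameter is attained: there exist $x^\ast,y^\ast\in A$ with $\|x^\ast-y^\ast\|=\diam A$. If $\diam A = 0$ then $A$ is a single point, $r_A\equiv 0$, and the identity is trivial, so assume $x^\ast\neq y^\ast$ and set $u = (x^\ast-y^\ast)/\|x^\ast-y^\ast\|$, a unit vector. Then
\[ r_A(u) \ge (x^\ast-y^\ast)\cdot u = \|x^\ast-y^\ast\| = \diam A, \]
so the supremum of $r_A$ over directions is at least $\diam A$. Combined with the first inequality, this gives the equality.

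The only point needing care --- and the closest thing to an obstacle --- is matching the range of $\theta$ to the set of directions. In the two-dimensional parametrization $\be_\theta=(\cos\theta,\sin\theta)$ the range function is invariant under $\theta\mapsto\theta+\pi$, since $\be_{\theta+\pi}=-\be_\theta$ interchanges the roles of $\sup$ and $\inf$ and hence $r_A(\theta+\pi)=r_A(\theta)$; restricting to $\theta\in[0,\pi]$ therefore loses nothing, and the direction $u$ above can always be represented, after possibly replacing $u$ by $-u$, by some $\theta\in[0,\pi]$. For general $\R^d$ one reads the supremum as taken over all unit vectors and the same argument applies verbatim. I would also note in passing that compactness is used only to realise the diametral pair exactly; a limiting argument over near-optimal pairs $x,y$ with $\|x-y\|>\diam A-\eps$ would yield the same conclusion for merely bounded $A$.
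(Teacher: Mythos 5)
Your proof is correct and follows essentially the same route as the paper's: the easy direction via $(x-y)\cdot\be_\theta\le\|x-y\|$, and the reverse direction by evaluating $r_A$ in the direction of a diametral pair, which exists by compactness. The minor refinements you add (the double-supremum rewriting, the explicit treatment of the $[0,\pi]$ parametrization, and the observation that compactness is dispensable for the first inequality) are sound but do not change the argument in substance.
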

\begin{proof}
Since $A$ is compact, for each $\theta$ there exist $x, y \in A$ such that 
\begin{align*}  r_A (\theta) & = x \cdot \be_\theta - y \cdot \be_\theta \\
& = ( x - y) \cdot \be_\theta  \leq \| x - y \| .\end{align*}
So $\sup_{0 \leq \theta \leq \pi} r_A (\theta) \leq \sup_{x, y \in A} \| x - y \| = \diam A$.

It remains to show that $\sup_{0 \leq \theta \leq \pi} r_A (\theta) \geq \diam A$.
This is clearly true if $A$ consists of a single point, so suppose that $A$ contains at least two points.
Suppose that the diameter of $A$ is achieved by $x, y \in A$ and let $z = y -x$ be such that $\hat z := z/\|z\|= \be_{\theta_0}$ for $\theta_0 \in [0,\pi]$.
Then
\begin{align*} \sup_{0 \leq \theta \leq \pi} r_A (\theta) & \geq r_A (\theta_0 ) \geq y \cdot \be_{\theta_0} - x\cdot \be_{\theta_0} \\
& = z \cdot \hat z = \| z \| = \diam A ,\end{align*}
as required.
\end{proof}

\section*{Acknowledgements}
The authors are grateful to Andrew Wade for his suggestions on this note. The first author is supported by an EPSRC studentship.

\end{document}